\newtheorem{theorem}{Theorem}[section]
\newtheorem{lemma}[theorem]{Lemma}
\theoremstyle{definition}
\newtheorem{algorithm}[theorem]{Algorithm}
\newtheorem{remark}[theorem]{Remark}
\newcommand\mycom[2]{\genfrac{}{}{0pt}{}{#1}{#2}}
\newcommand{\argmax}{\mathop{\mathrm{argmax}}}
\newcommand{\argmin}{\mathop{\mathrm{argmin}}}
\newcommand{\s}{\sigma} 
\begin{document}
\setcounter{page}{1}
\vskip 2pc\goodbreak
\bigskip
\bigskip
\centerline{\Large{Deterministic Sparse Sublinear FFT with Improved Numerical Stability}}            

\bigskip
\bigskip
\centerline {\large Gerlind Plonka\footnotemark \,    and  Therese von Wulffen\footnotemark[\value{footnote}]}
\footnotetext{University of G\"ottingen, Institute for Numerical and Applied Mathematics, Lotzestra{\ss}e 16-18, 37083 G\"ottingen, Germany.\\ Email: 
{plonka@math.uni-goettingen.de},
{therese.vonwulffen@stud.uni-goettingen.de}}
\bigskip
\centerline{\large\today}
\bigskip
\bigskip

\begin{abstract}
In this paper we extend the deterministic sublinear FFT algorithm in \cite{PWC18} for fast reconstruction of $M$-sparse vectors ${\mathbf x}$ of length $N= 2^J$, where we assume that all components of the discrete Fourier transform  $\hat{\mathbf x}= {\mathbf F}_{N} {\mathbf x}$ are available.
The sparsity of ${\mathbf x}$ needs not to be known a priori, but is determined by the algorithm. If the sparsity $M$ is larger than $2^{J/2}$, then the algorithm turns into a usual FFT algorithm with runtime ${\mathcal O}(N \log N)$. For  $M^{2} < N$, the runtime of the algorithm is ${\mathcal O}(M^2 \, \log N)$.
The proposed modifications of the approach in \cite{PWC18} lead to a significant improvement of the condition numbers of the Vandermonde matrices which are employed in the iterative reconstruction. 
Our numerical  experiments show that our modification has a huge impact on the stability of the algorithm.
While the algorithm in \cite{PWC18} starts to be unreliable for $M>20$ because of numerical instabilities, the modified algorithm is still numerically stable for $M=200$.
\smallskip

\noindent
\textbf{Key words:} sparse FFT, discrete Fourier transform, sublinear algorithm, Vandermonde matrices

\noindent
\textbf{AMS Subject classification:} 65T50, 42A38
\end{abstract}

\section*{Declarations}

\noindent
\textbf{Funding:} The authors gratefully acknowledge  the support by the German Research Foundation in the framework of the RTG 2088.
\smallskip

\noindent
\textbf{Conflicts of interest/Competing interests:} Not applicable
\smallskip

\noindent
\textbf{Availability of data and material:} Not applicable
\smallskip

\noindent
\textbf{Code availability:} A Python implementation of the new algorithm is available under the link ``software'' on our homepage
\url{http://na.math.uni-goettingen.de}.

\section{Introduction}

Sparse FFT methods can be used in many different applications, where it is a priori known that the resulting signal in time/space or frequency domain is sparse. 
Such algorithms have earned a considerable interest within the last years. 

Many deterministic sparse FFT algorithms are based on combinatorial approaches or  phase shift, see e.g.\ \cite{Ak14,Bi17,Iw10,Iw13,SI13,CLW16}. 
These approaches usually need access to arbitrary values of a given function $f(x) =\sum_{j=1}^M a_j \, {\mathrm e}^{2 \pi {\mathrm i} w_j x}$ assuming that the unknown frequencies $w_j$ are in $[-N/2, N/2) \cap {\mathbb Z}$. The sparse FFT techniques in \cite{HKP13,PTV16} are based on Prony's method.

By contrast, the deterministic algorithms proposed in \cite{PW16,PW17,PWC18,MZI19}, or in  \cite{PPS18}, Section 5.4,  consider the fully discrete problem, where for a given vector ${\mathbf x} \in {\mathbb C}^N$, we want to efficiently compute its discrete Fourier transform ${\hat{\mathbf x}}$ under the assumption that $\hat {\mathbf x}$ is $M$-sparse or has a short support of length $M$. Recently, these techniques have also been transferred to derive sparse fast algorithms for the discrete cosine transform, \cite{BP19,BP19-1}.
\smallskip

\textbf{Problem statement.} 
Let ${\mathbf x} = (x_j)_{j=0}^{N-1} \in {\mathbb C}^N$ with $N= 2^J$ for some $J>1$. Further, let  ${\mathbf F}_N :=(\omega_N^{jk})_{j,k=0}^{N-1} \in {\mathbb C}^{N \times N}$ with $\omega_N := {\mathrm e}^{-2 \pi {\mathrm i}/N}$ denote the Fourier matrix of order $N$,  and  ${\mathbf F}_N^{-1} = \frac{1}{N} 
\overline{{\mathbf F}}_{N}$. We consider the following two  scenarios, which can essentially be treated with the same algorithm.\\[1ex]
(a) Assume that $\hat{\mathbf x} :={\mathbf F}_N \, {\mathbf x} = (\hat{x}_k)_{k=0}^{N-1}$ is given. How  do we, in a sublinear way, determine ${\mathbf x}$ from $\hat{\mathbf x}$,  if it can be assumed that  ${\mathbf x}$ is $M$-sparse with $M^2 < N$?\\[1ex]
(b) Assume that ${\mathbf x} \in {\mathbb C}^N$ is given.
How  do we, in a sublinear way, determine $\hat{\mathbf x} = {\mathbf F}_N {\mathbf x}$ from ${\mathbf x}$, if it can be assumed that  $\hat{\mathbf x}$ is $M$-sparse with $M^2 < N$?\\[1ex]

In both scenarios, $M$  needs not to be known beforehand. However, if $M$ is known, then this knowledge can be used to simplify the algorithm. Throughout the paper, we say that a vector ${\mathbf x}$ is $M$-sparse, if only $M$ components have an amplitude that exceeds  a predetermined small threshold $\epsilon>0$. 

This paper is organized as follows.
In Section 2, we summarize the basic multi-scale idea of the algorithm used in \cite{PWC18} for the scenario (a). Section 3 is devoted to the extension of the method in \cite{PWC18}. First, we present the general pseudocode  of the sparse FFT algorithm.  The numerical stability of this algorithm mainly depends on the condition number of special Vandermonde matrices, which are used at each iteration step for solving a linear system  with at most $M$ unknowns.
In Section \ref{sec:3.1} we give an estimate of the condition number  of the occurring Vandermonde matrices, which are partial matrices of the Fourier matrix. This estimate is used in the sequel to determine the two  free parameters determining the Vandermonde matrix. One parameter stretches the given nodes generating the Vandermonde matrix, and the second parameter determines the number of its rows.
  In Section 4  we briefly show, how the derived algorithm can be simply adapted to solve the sparse FFT problem (b).
Finally, in Section \ref{results} we   present the large impact of the new approach that allows rectangular Vandermonde matrices.
A Python implementation of the new algorithm is available under the link ``software'' on our homepage
\url{http://na.math.uni-goettingen.de}.

\section{Multi-scale Sparse Sublinear FFT Algorithm from \cite{PWC18}}
\label{sec2}

We consider the problem stated in (a) to derive an iterative
stable procedure to reconstruct $\mathbf{x}$ from adaptively chosen Fourier entries of $\hat{\mathbf  x}$. To state the multi-scale algorithm from \cite{PWC18}, we need to define
the periodized vectors
\begin{equation}
\label{def_period}
\mathbf{x}^{(j)} = (x_{k}^{(j)})^{{2}^j-1}_{k=0} :=\Big(\sum_{{l}=0}^{2^{{J}-{j}}-1}x_{{k}+2^{{j}}{l}}\Big)_{k=0}^{2^{j}-1}\in \mathbb{C}^{2^{{j}}},  \qquad  {j}=0,\ldots,{J}.
\end{equation}
In particular, $\mathbf{x}^{(J)}=\mathbf{x}$ and $\mathbf{x}^{(0)}=\sum\limits_{k=0}^{{N}-1}x_{{k}}$   is the sum of all components $\mathbf{x}$. 
Observe that, if the vector $\hat{\mathbf x} = (\hat{x}_k)_{k=0}^{N-1}$ is known, then also the Fourier transformed vectors $\hat{\mathbf x}^{(j)}$ are immediately known, and we have
$$ \hat{\mathbf x}^{(j)} = {\mathbf F}_{2^j} {\mathbf x}^{(j)}  = (\hat{x}_{2^{J-j}k})_{k=0}^{2^j-1} $$
(see Lemma 2.1 in \cite{PW16}).
Throughout the paper, we assume that no cancellation
appears in the periodic vectors, i.e., for each significant component $|{x}_k| > \epsilon$ of $\mathbf{x}$,  $k \in \{0, \ldots, N-1\}$, we have
\begin{equation}
\label{no_cancell}
 |x_{k'}^{(j)}| > \epsilon  \text{ for all } {j}=0,\ldots,{J}-1 , \qquad k' = k \, \mathrm{mod} \, 2^{j}
\end{equation}
for a fixed shrinkage constant $\epsilon>0$. 
 Condition (\ref{no_cancell}) is for example satisfied if all components of ${\mathbf x}$ lie in one quadrant of the complex plane, e.g. $\text{Re}\,  x_{j} \ge 0$ and $\text{Im} \, x_{j} \ge 0$ for $j=1, \ldots , N-1$. 

\noindent\textbf{Idea of the algorithm.} The multi-scale algorithm in \cite{PWC18} iteratively computes ${\mathbf x}^{(j+1)}$ from ${\mathbf x}^{(j)}$, for $j=j_0, \ldots , J-1$.
If the sparsity $M$ of ${\mathbf x}$ is unknown, then we start with $j_0=0$ and ${\mathbf x}^{(0)} := \hat{x}_0 = \sum_{k=0}^N x_k$.
If $M$ with $M^2 < N$ is known beforehand, then we fix $j_0 = \lfloor \log_2 M \rfloor +1$ and compute 
$${\mathbf x}^{(j_0)}:= {\mathbf F}_{2^{j_0}}^{-1} \hat{\mathbf x}^{(j_0)}
= \frac{1}{2^{j_0}} \overline{\mathbf F}_{2^{j_0}} \, (\hat{x}_{2^{J-j_0}k})_{k=0}^{2^{j_0}-1}$$
using an FFT algorithm with complexity $\mathcal{O}(j_0 \, 2^{j_0})
= \mathcal{O}(M\, \log M)
$.
 At the $j$-th iteration step, we assume that ${\mathbf x}^{(j)} \in {\mathbb C}^{2^{j}}$ with sparsity $M_{j}$ has already been computed.  Then we always have $M_j \le M$. For  $M_j^2 < 2^j$, 
the computation of  ${\mathbf x}^{(j+1)}$ from ${\mathbf x}^{(j)}$ is based on the following theorem (see Theorem 2.2 in  \cite{PWC18}).

\begin{theorem}\label{theo1}
Let $\mathbf{x}^{(j)}$,  $j=0,\ldots ,J-1$, be the vectors defined in $(\ref{def_period})$ satisfying $(\ref{no_cancell})$. Then, for each $j=0,\ldots ,J-1$, we have: if $\mathbf{x}^{(j)} \in \mathbb{C}^{2^j}$ is $M_j$-sparse with support
indices  $0\leq\ n_{1}<n_{2}<\ldots<n_{{M}_j} \leq\ 2^{j}-1$, then the vector  $\mathbf{x}^{(j+1)}$ can be uniquely
recovered from $\mathbf{x}^{(j)}$ and $M_{j}$ components
 $\hat{x}_{k_{1}},\ldots,\hat{x}_{k_{M_j}}$ of $\hat{\mathbf{x}}=\mathbf{F}_{N}\, \mathbf{x}$, where the indices 
 $k_{1},\ldots,k_{M_{j}}$ are taken from the set
$\{2^{J-j-1}(2l+1):l=0,\ldots2^j-1\}$ such that the
matrix
\begin{equation}\label{A}
 \mathbf{A}^{(j)}:=\left(\omega_{{N}}^{k_{p} n_{r}}\right)_{p,r=1}^{M_{j}} \end{equation}
 is invertible.
\end{theorem}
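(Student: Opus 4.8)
The plan is to reduce the recovery of $\mathbf{x}^{(j+1)}$ from the already computed $\mathbf{x}^{(j)}$ to a single linear system whose matrix is exactly $\mathbf{A}^{(j)}$. First I would establish the two-scale relation connecting consecutive periodizations. Splitting the defining sum $(\ref{def_period})$ for $x_k^{(j)}$ according to the parity of the summation index $l$ and using $2^j(2m)=2^{j+1}m$ together with $2^j(2m+1)=2^j+2^{j+1}m$, one obtains
\begin{equation}\label{eq:twoscale-plan}
x_k^{(j)} = x_k^{(j+1)} + x_{k+2^j}^{(j+1)}, \qquad k = 0, \ldots, 2^j - 1,
\end{equation}
so each entry of the known vector $\mathbf{x}^{(j)}$ is the sum of the two entries of $\mathbf{x}^{(j+1)}$ sitting above it (at indices $k$ and $k+2^j$).

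Next I would localize the support of $\mathbf{x}^{(j+1)}$. Every index $m\in\{0,\ldots,2^{j+1}-1\}$ decomposes uniquely as $m = n + 2^j\delta$ with $n = m \bmod 2^j$ and $\delta\in\{0,1\}$. The no-cancellation hypothesis $(\ref{no_cancell})$ guarantees that significance is preserved under periodization, so a significant entry $x_m^{(j+1)}$ forces $x_{m\bmod 2^j}^{(j)}$ to be significant, i.e.\ $m \bmod 2^j \in \{n_1,\ldots,n_{M_j}\}$. Hence
$$\operatorname{supp}\mathbf{x}^{(j+1)} \subseteq \{\, n_r,\; n_r + 2^j \;:\; r = 1,\ldots,M_j \,\},$$
and by $(\ref{eq:twoscale-plan})$ these $2M_j$ possibly nonzero entries are coupled pairwise. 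Since the sums $x_{n_r}^{(j)} = x_{n_r}^{(j+1)} + x_{n_r+2^j}^{(j+1)}$ are already known, there remain only $M_j$ genuine unknowns, which I encode as the differences $y_r := x_{n_r}^{(j+1)} - x_{n_r+2^j}^{(j+1)}$.

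The crucial step is to extract these differences from the Fourier data. Because $\hat{\mathbf{x}}^{(j+1)} = \mathbf{F}_{2^{j+1}}\mathbf{x}^{(j+1)} = (\hat{x}_{2^{J-j-1}k})_{k=0}^{2^{j+1}-1}$, the even-indexed samples $k=2l$ merely reproduce $\hat{\mathbf{x}}^{(j)}$ and carry no new information, whereas the odd-indexed samples $k=2l+1$, namely the entries $\hat{x}_{k_p}$ with $k_p\in\{2^{J-j-1}(2l+1):l=0,\ldots,2^j-1\}$, are the new data. Expanding such a sample over the support and factoring out $\omega_N^{k_p n_r}$, I would invoke the key identity $\omega_N^{k_p 2^j} = \omega_N^{2^{J-1}(2l_p+1)} = (-1)^{2l_p+1} = -1$, which converts the pairwise coupling into precisely the difference:
\begin{equation}\label{eq:system-plan}
\hat{x}_{k_p} = \sum_{r=1}^{M_j} \omega_N^{k_p n_r}\bigl(x_{n_r}^{(j+1)} - x_{n_r+2^j}^{(j+1)}\bigr) = \sum_{r=1}^{M_j} \omega_N^{k_p n_r}\, y_r, \qquad p = 1,\ldots,M_j.
\end{equation}

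This is exactly the system $\mathbf{A}^{(j)}\mathbf{y} = (\hat{x}_{k_p})_{p=1}^{M_j}$ with $\mathbf{A}^{(j)}$ as in $(\ref{A})$. If the indices $k_p$ are chosen so that $\mathbf{A}^{(j)}$ is invertible, then $\mathbf{y}$ is uniquely determined, and combining it with the known sums from $(\ref{eq:twoscale-plan})$ gives $x_{n_r}^{(j+1)} = \tfrac12\bigl(x_{n_r}^{(j)} + y_r\bigr)$ and $x_{n_r+2^j}^{(j+1)} = \tfrac12\bigl(x_{n_r}^{(j)} - y_r\bigr)$, which proves unique recovery. I expect the main obstacle to lie not in this algebra but in justifying that an invertible choice always exists: here I would note that, since $0\le n_1<\cdots<n_{M_j}<2^j$, the nodes $\omega_{2^j}^{n_r}$ are pairwise distinct, so the full $2^j\times M_j$ matrix $(\omega_N^{k n_r})$ over all admissible $k$ has generalized Vandermonde structure and full column rank $M_j$; consequently some $M_j$-subset of its rows yields an invertible $\mathbf{A}^{(j)}$. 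The second point deserving care is the support localization of the second paragraph, where the no-cancellation hypothesis is essential.
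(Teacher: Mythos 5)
Your argument is correct and follows essentially the same route as the paper: the two-scale relation $(\ref{x0+x1})$, the support localization via $(\ref{no_cancell})$, and a linear system in the differences $x_{n_r}^{(j+1)}-x_{n_r+2^j}^{(j+1)}$ (the paper's $2\tilde{\mathbf x}_0^{(j+1)}-\tilde{\mathbf x}^{(j)}$), which you obtain by expanding the odd-indexed samples directly with $\omega_N^{k_p 2^j}=-1$ while the paper gets the identical system $(\ref{teil})$ from the radix-2 factorization of ${\mathbf F}_{2^{j+1}}$. Your closing remark that an invertible row selection always exists (via the Vandermonde structure of the full $2^j\times M_j$ matrix) is a small bonus the paper defers to its later discussion of $\sigma_j=1$.
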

The proof of Theorem \ref{theo1} is constructive.  With the notation  ${\mathbf x}^{(j+1)}= \left( \begin{array}{c} {\mathbf x}_{0}^{(j+1)}\\ {\mathbf x}_{1}^{(j+1)} \end{array} \right)$, i.e., 
$\mathbf{x}_{0}^{(j+1)}:= \Big(x_{\ell}^{(j+1)}\Big)_{\ell=0}^{2^{j}-1} $ and $\mathbf{x}_{1}^{(j+1)}:= \Big(x_{\ell}^{(j+1)}\Big)_{\ell=2^{j}}^{2^{j+1}-1} $, we have from (\ref{def_period})
\begin{equation}
\label{x0+x1}
\mathbf{x}^{(j)}=\mathbf{x}_{0}^{(j+1)}+\mathbf{x}_{1}^{(j+1)}.
\end{equation}
Thus, if $\mathbf{x}^{(j)}$ is known, it suffices to compute ${\mathbf x}_0^{(j+1)}$, while $\mathbf{x}_{1}^{(j+1)}$ then follows from (\ref{x0+x1}). We can now use the factorization of the Fourier matrix ${\mathbf F}_{2^{j+1}}$  (see Equation (5.9) in \cite{PPS18}), and obtain
$$ \left( \!\!\begin{array}{c} (\hat{x}_{2\ell}^{(j+1)})_{\ell=0}^{{2^j-1}}\\
(\hat{x}_{2\ell+1}^{(j+1)})_{\ell=0}^{{2^j-1}} \end{array}\!\!\! \right) = \left(\!\!\!\begin{array}{cc} {\mathbf F}_{2^j} & 
{\mathbf 0}\\ {\mathbf 0} & {\mathbf F}_{2^j} \end{array} \!\!\!\right)\! 
\left( \!\!\!\begin{array}{c} \mathbf{x}_{0}^{(j+1)} + \mathbf{x}_{1}^{(j+1)} \\
{\mathbf W}_{2^j} (\mathbf{x}_{0}^{(j+1)} - \mathbf{x}_{1}^{(j+1)}) \end{array} \!\!\!\right) = \left( \!\!\!\begin{array}{c} {\mathbf F}_{2^j} {\mathbf x}^{(j)} \\
{\mathbf F}_{2^{j}} \, {\mathbf W}_{2^j} (2 {\mathbf x}_0^{(j+1)} - {\mathbf x}^{(j)}) \end{array} \!\!\!\right),
$$
where ${\mathbf W}_{2^j} := \textrm{diag} \, (\omega_{2^{j+1}}^0, \ldots , \omega_{2^{j+1}}^{2^j-1})$,  and ${\mathbf 0}$ denotes the zero matrix of size $2^{j} \times 2^{j}$. Thus, we conclude
\begin{equation}\label{teil}
{\mathbf F}_{2^{j}} \, {\mathbf W}_{2^j} \Big( 2{\mathbf x}_0^{(j+1)} - {\mathbf x}^{(j)} \Big) =  \Big( \hat{x}_{2\ell+1}^{(j+1)}\Big)_{\ell=0}^{{2^j-1}}. 
\end{equation}

 Further, (\ref{x0+x1}) together with (\ref{no_cancell}) implies that ${\mathbf x}_0^{(j+1)}$ can only have significant entries for the same index set as ${\mathbf x}^{(j)}$, and we have to compute only these $M_j$ entries.
Introducing the restricted vectors 
\begin{equation} 
    \tilde{\mathbf{x}}^{(j+1)}_{0}:= \left(x^{(j+1)}_{n_{r}}\right)_{r=1}^{M_{j}}\in \mathbb{C}^{M_{j}}, \ \ \ 
    \tilde{\mathbf{x}}^{(j)}:= \left(x^{(j)}_{n_{r}}\right)_{r=1}^{M_{j}}\in \mathbb{C}^{{M_{j}}}, 
    \notag
\end{equation}
we can also restrict the matrix ${\mathbf F}_{2^{j}} \, {\mathbf W}_{2^j}  \in {\mathbb C}^{2^j \times 2^j}$ in the linear system (\ref{teil}) to its $M_j$ columns with indices $n_r$. Finally, it suffices to restrict the system in (\ref{teil}) to $M_j$ linear independent rows, and ${\mathbf x}_0^{(j+1)} $ can still be uniquely computed.
Therefore a restriction ${\mathbf A}^{(j)} \in {\mathbb C}^{M_j \times M_j}$ of the product ${\mathbf F}_{2^{j}} \, {\mathbf W}_{2^j}$
can be chosen as 
\begin{equation}\label{aa}  \mathbf{A}^{(j)}:= \left(\omega_{{2^j}}^{h_p n_{r}}\right)_{p,r=1}^{M_{j}} \, \textrm{diag} \left( \omega_{2^{j+1}}^{n_1}, \ldots ,\omega_{2^{j+1}}^{n_{M_j}} \right).
\end{equation}
Here, the matrix $\left(\omega_{{2^j}}^{h_p n_{r}}\right)_{p,r=1}^{M_{j}}$ is a restriction of ${\mathbf F}_{2^j}$ to the the rows 
 $0 \le h_1 < h_2 < \ldots < h_{M_j} \le 2^j$ and columns  $n_r$, $r=1, \ldots , M_j$  corresponding to support indices of ${\mathbf x}^{(j)}$. 
 The diagonal matrix  is the restriction of ${\mathbf W}_{2^j}$ to the rows and columns $n_r$. Comparison with (\ref{A}) yields $k_p = 2^{J-j-1}(2h_p+1)$, $p=1, \ldots, M_j$.
In Algorithm 2.3 in \cite{PWC18}, Theorem \ref{theo1} is applied to iteratively compute ${\mathbf x}^{(j+1)}$ from ${\mathbf x}^{(j)}$, if 
solving the restricted linear system 
\begin{equation}\label{eqA} {\mathbf A}^{(j)} \Big( 2\tilde{\mathbf x}_0^{(j+1)} - \tilde{\mathbf x}^{(j)} \Big) =  \Big( \hat{x}_{2h_p+1}^{(j+1)}\Big)_{p=1}^{M_j}
\end{equation}
is cheaper than an FFT algorithm for vectors of length $2^j$.

The further results in  \cite{PWC18}  focus on finding good choices of indices  $(h_p)_{p=1}^{M_j}$ at each iteration step.
Thereby, the paper restricts to matrices $\mathbf{A}^{(j)}$ of the form 
\begin{equation}\label{AV} \mathbf{A}^{(j)}:= \left(\omega_{{2^j}}^{\sigma_j \, p \, n_{r}}\right)_{p=0,r=1}^{M_{j}-1,M_j} \, \textrm{diag} \left( \omega_{2^{j+1}}^{{n_1}}, \ldots \omega_{2^{j+1}}^{n_{M_j}} \right),
\end{equation}
i.e., we choose $h_{p+1} =  \sigma_j p$ for $p=0, \ldots , M_j-1$  and some parameter $\sigma_{j} \in \{1, \ldots , 2^{j}-1\}$. The first matrix in the factorization (\ref{AV}) is a Vandermonde matrix generated by the roots of unity $w_{2^j}^{\sigma_j  n_r}$, $r=1, \ldots , M_j$. The iterative algorithm which is based on Theorem \ref{theo1} will be stable, if the linear system (\ref{eqA}) can be efficiently computed in a stable way at each level $j=j_0, \ldots , J$.
Therefore, \cite{PWC18} tries to find parameters $\sigma_j \in \{ 1, \ldots , 2^j-1\}$ such that 
$$ {\mathbf V}_{M_j}(\sigma_j): = \Big( \omega_{2^j}^{\sigma_j \, p \, n_r} \Big)_{p=0,r=1}^{M_j-1, M_j}$$
is invertible and has a good condition  number.
Observe that ${\mathbf V}_{M_j}(\sigma_j)$ is always invertible if we choose  $\sigma_j=1$.
However, $\sigma_j =1$ can lead to a very bad condition number of ${\mathbf V}_{M_j}(\sigma_j)$ and $\mathbf{A}^{(j)}$, respectively.

\begin{remark}
Using Theorem \ref{theo1}, the reconstruction algorithm is based on the idea to iteratively compute periodizations ${\mathbf x}^{(j)} \in {\mathbb C}^{2^{j}}$ of ${\mathbf x} \in {\mathbb C}^{2^{J}}$ of growing length $2^{j}$. At each iteration step, we rigorously exploit the sparsity of these vectors ${\mathbf x}^{(j)}$  and conclude from the support $\{n_{1}, \ldots , n_{M_{j}} \}$ of ${\mathbf x}^{(j)}$ that the support set of ${\mathbf x}^{(j+1)}$ can only be a subset of $\{n_{1}, \ldots , n_{M_{j}} \} \cup \{n_{1}+2^{j}, \ldots , n_{M_{j}}+2^{j} \} $. 
Therefore, the assumption (\ref{no_cancell}) is crucial, since otherwise, not all support indices may be found.

If the sparsity $M$ of ${\mathbf x}$ is known beforehand, then the iteration would start by computing the periodization ${\mathbf x}^{(j_{0})}$ of length $2^{j_{0}} > M$ directly, and we can compare the sparsity of ${\mathbf x}^{(j_{0})}$ with $M$ to ensure that no cancellation appears.
If the sparsity of ${\mathbf x}^{(j_{0})}$ is smaller than $M$, we could then employ a direct FFT algorithm to find the next periodizations ${\mathbf x}^{(j)}$, $j >j_{0}$, until the sparsity of ${\mathbf x}^{(j)}$ is equal to $M$. The complexity of the algorithm would then increase and depends on the level, where the last cancellation appears. In the worst case, if cancellation appears already in ${\mathbf x}^{J-1}$, we would get the complexity of a usual FFT algorithm.
\end{remark}

\section{Extension of the Sparse FFT Algorithm}

The main contribution of this paper is an extension of the algorithm  proposed in \cite{PWC18}, which tremendously improves the stability  of that algorithm to make it really applicable.

We will stay with the  iterative approach to compute ${\mathbf x}^{(j+1)} \in {\mathbb C}^{2^{j+1}}$ from the $M_{j}$-sparse vector ${\mathbf x}^{(j)} \in {\mathbb C}^{2^{j}}$ via (\ref{eqA}) and (\ref{x0+x1}), where we
consider only matrices ${\mathbf A}^{(j)}$, which are given as a product of a Vandermonde matrix  and a diagonal matrix (with condition number $1$) as in  (\ref{AV}), and we will also try to find a suitable parameter  $\sigma_j \in \{1, \ldots , 2^{j}-1 \}$ to improve the numerical stability of the system. The Vandermonde structure provides the advantage that the system in (\ref{eqA}) can be solved with computational cost of  ${\mathcal O}(M^2)$  (see, e.g., \cite{De89}).

We however do not insist on a square matrix  as in \cite{PWC18}, but allow the Vandermonde matrix factor to be a rectangular matrix with more rows than columns of the form 
\begin{equation}\label{VR}
{\mathbf V}_{M_j',M_j}(\sigma_j): = \Big( \omega_{2^j}^{\sigma_j \, p \, n_r} \Big)_{p=0,r=1}^{M_j'-1, M_j}, \qquad M_j' \ge M_j.
\end{equation}
We will choose the  number of rows of the Vandermonde matrix 
${\mathbf V}_{M_j',M_j}(\sigma_j)$ adaptively at each iteration step  
based on the obtained estimate of the condition number of ${\mathbf V}_{M_j',M_j}(\sigma_j)$, where
\begin{equation}
 \kappa_{2}({\mathbf V}_{M',M}(\sigma)) := \frac{ \max_{{\mathbf u}\in \mathbb{C}^{M}, \|\mathbf{u}\|_{2}=1} \| \mathbf{V}_{M',M}(\sigma) \, \mathbf{u} \|_{2}}
{\min_{{\mathbf u }\in {\mathbb C}^{M}, \|\mathbf{u} \|_{2}=1} \|\mathbf{V}_{M',M}(\sigma)\, {\mathbf u}\|_{2}}. 
\end{equation}

We start with presenting the general pseudo code for the case of unknown sparsity $M$. In the further subsections, we will particularly present, how the   matrix ${\mathbf A}^{(j)}$ needs to be chosen, where we allow now a rectangular matrix.
 In Algorithm \ref{algorithm_Rec},  we use the  set notation $I^{(j)} + 2^{j}:= \{n+2^{j}: \, n \in I^{(j)} \}$. 

\begin{algorithm}{\textbf{Sparse (inverse) FFT for unknown sparsity $M$}}
\label{algorithm_Rec}
\newline
\noindent\textbf{{Input:}}
$N=2^{J}$ (length of the vector $\mathbf{x}$), 
\begin{addmargin}[40pt]{0pt} $\epsilon$ (shrinkage constant),\\
possible access to Fourier values $\hat{x}_{k}$, $k=0,\ldots, N-1$.\end{addmargin}

\noindent\textbf{Initialization:}\\
if $|\hat{x}_{0}|<\epsilon$, Output: $M=0$, $\mathbf{x}=\mathbf{0}$,  $I^{(J)}=\emptyset$.\\
if $|\hat{x}_{0}|\geq\epsilon$, then $M:=1$,  $I^{(0)}:=\{0\}$, and $\tilde{\mathbf{x}}^{(0)}=\hat{x}_{0}$.

\par \noindent\textbf{Loop}\\
for $j=0,\ldots,J-1:$
\begin{addmargin}[25pt]{0pt}
if $M^{2}\geq2^{j}$, then
    \begin{addmargin}[25pt]{0pt}
    \textbf{Determine $ \mathbf{x}^{(j+1)}_0$:}\\
    \noindent Put $\hat{\mathbf{z}}^{(j+1)}:= \left(\hat{x}^{(j+1)}_{2p+1}\right)_{p=0}^{2^{j}-1} = \left(\hat{x}_{2^{J-j-1}(2p+1)}\right)_{p=0}^{2^{j}-1}\in \mathbb{C}^{2^{j}}.$\\
    Compute $\mathbf{x}_{0}^{(j+1)} :=\frac{1}{2}\left(\text{diag}\left((\omega_{2^{j+1}}^{k})_{k=0}^{2^{j}-1}\right)^{*}\left(\mathbf{F}_{2^{j}}\right)^{-1}\hat{\mathbf{z}}^{(j+1)}+\mathbf{x}^{(j)} \right)$ using an FFT algorithm.\\ \\
    \textbf{Determine $ \mathbf{x}^{(j+1)}$ and $I^{(j+1)}$:}\\
    Compute $\mathbf{x}_{1}^{(j+1)} :=\mathbf{x}^{(j)}-\mathbf{x}_{0}^{(j+1)}$.\\
    Put ${\mathbf{x}}^{(j+1)} :=\left(({\mathbf{x}}^{(j+1)}_{0})^{T}, ({\mathbf{x}}^{(j+1)}_{1})^{T}\right)^{T}$.\\
   Determine the index set $I^{(j+1)}$ by deleting
all indices in $\left(I^{(j)} \cup (I^{(j)}+2^{j})\right)$\\
    that correspond to entries in ${\mathbf{x}}^{(j+1)}$ with modulus being smaller than $\epsilon$.\\
    Set $M:=\# I^{(j+1)}$.\\
    \end{addmargin}
else
    \begin{addmargin}[25pt]{0pt}
    Set $\tilde{\mathbf{x}}^{(j)}=(\mathbf{x}^{(j)}_l)_{l\in I^{(j)}}$.\\ 
    \textbf{Determine the Matrix $\mathbf{A}^{(j)} \in {\mathbb C}^{M' \times M}$ and the index set $\{h_{p_1}, \ldots , h_{p_M}\}$: see Sections 3.2 and 3.3.}\\
    \textbf{Determine $ \tilde{\mathbf{x}}^{(j+1)}_0$:}\\
    Choose the Fourier values $\hat{\mathbf{z}}^{(j+1)}:= \Big( \hat{x}_{2h_p+1}^{(j+1)}\Big)_{p=1}^{M'} = \Big( \hat{x}_{2^{J-j-1}(2h_p+1)}\Big)_{p=1}^{M'}$.\\
    Compute $\tilde{\mathbf x}_0^{(j+1)}$ by solving the system
    \begin{equation}\label{a*}
    {\mathbf A}^{(j)} \Big( 2\tilde{\mathbf x}_0^{(j+1)} - \tilde{\mathbf x}^{(j)} \Big) =  \hat{{\mathbf z}}^{(j+1)}.
    \end{equation}
    \textbf{Determine $ \Tilde{\mathbf{x}}^{(j+1)}$ and $I^{(j+1)}$:}\\
    Compute $
    \tilde{\mathbf{x}}^{(j+1)}_{1} :=\Tilde{\mathbf{x}}^{(j)}-\tilde{\mathbf{x}}^{(j+1)}_{0}
    $.\\
    Put $\tilde{\mathbf x}^{(j+1)} :=\left((\Tilde{\mathbf{x}}^{(j+1)}_{0})^{T}, (\Tilde{\mathbf{x}}^{(j+1)}_{1})^{T}\right)^{T}
    $.\\
    Determine the index set $I^{(j+1)}$ by deleting
all indices in $\left(I^{(j)} \cup (I^{(j)}+2^{j})\right)$
    that correspond to entries in $\tilde{\mathbf{x}}^{(j+1)}$ with modulus being smaller than $\epsilon$.\\
   Set $M:=\# I^{(j+1)}$.\\
    \end{addmargin}
\end{addmargin}

\noindent\textbf{Output:}
$I^{(J)}$, the set of active indices in  of  $\mathbf{x}$,
\begin{addmargin}[50pt]{0pt} $\Tilde{\mathbf{x}}=\Tilde{\mathbf{x}}^{(J)}=(x_{l})_{l \in I^{(J)}}$, the vector restricted to nonzero entries.
\end{addmargin}

\end{algorithm}

To determine the suitable matrix 
$${\mathbf A}^{(j)} ={\mathbf V}_{M_j',M_j}(\sigma_j) \, \textrm{diag} \left( \omega_{2^{j+1}}^{{n_1}}, \ldots \omega_{2^{j+1}}^{n_{M_j}} \right), $$
we have to find a well-conditioned Vandermonde matrix ${\mathbf V}_{M_j',M_j}(\sigma_j)$.
Our procedure consists of two steps. \\
1) We compute a suitable parameter $\sigma_j$ with ${\mathcal O}(M^2)$ operations.\\ 
2) We compute the number $M_j'$ of needed rows in the Vandermonde matrix, to achieve a well-conditioned coefficient matrix in the system (\ref{a*}).

As seen already in \cite{PWC18}, we can simplify the procedure of determining ${\mathbf V}_{M_j',M_j}(\sigma_j)$, if the number of significant entries $M_j$ of ${\mathbf x}^{(j)}$ did not change in the previous iteration step, i.e., if $M_{j-1}=M_j$.
In this case, we can just choose $\sigma_{j+1} := 2 \sigma_j$ and stay with the number of columns, i.e., $M_{j}' := M_{j-1}'$  (see also Subsection \ref{sec:mj}).

\subsection{Estimation of the condition number of ${\mathbf V}_{M_j',M_j}(\sigma_j)$}
\label{sec:3.1}

It is crucial for our algorithm to have a good estimate of the condition  number of ${\mathbf V}_{M_j',M_j}(\sigma_j)$.
The condition  number of ${\mathbf V}_{M_j',M_j}(\sigma_j)$ strongly depends on the minimal distance between its generating nodes $\omega_{2^j}^{\sigma_j n_r}$.
 More precisely, we have the following theorem  (see \cite{Mo15,PWC18} or Theorem 10.23 in \cite{PPS18}).
\begin{theorem}
\label{Theorem_Moitra}
Let  $0\leq n_{1}<n_{2}<\ldots<n_{M_j}<2^{j}$  be a given set of indices. For a
given $\sigma_j \in \{1,\ldots,2^{j}-1\}$ we define
\begin{equation}
\label{def_d_sigma}
 d_{j} =   d({\sigma_j}) := \min_{1 \leq k<l \leq M_j}  \left( (\pm \sigma_j \, (n_{l}-n_{k})) \, \mathrm{mod}\, 2^{j}  \right)
\end{equation} 
as the smallest (periodic) distance between two indices $\sigma_j \,{n_l}$ and $\sigma_j \, {n_k}$, and assume that $d_{j}>0$.
 Then the condition number $\kappa_{2}(\mathbf{V}_{M_j',M_j}(\sigma_j))$ of the Vandermonde matrix $\mathbf{V}_{M_j',M_j}(\sigma_j):= \Big( \omega_{2^{j}}^{\sigma_j \, p \, n_r} \Big)_{p=0,r=1}^{M_j'-1, M_j}$ satisfies
\begin{equation}
\label{upperbound_condition}
    \kappa_{2}(\mathbf{V}_{M_j',M_j}(\sigma_j))^2 \leq \frac{M_j'+2^{j}/d_{j}}{M_j'-2^{j}/d_{j}},
\end{equation} 
provided that $M_j'>\frac{2^{j}}{d_{j}}$.
\end{theorem}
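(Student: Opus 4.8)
The plan is to pass from the rectangular matrix $\mathbf{V} := \mathbf{V}_{M_j',M_j}(\sigma_j)$ to its Gram matrix $\mathbf{G} := \mathbf{V}^{*}\mathbf{V} \in \mathbb{C}^{M_j \times M_j}$. Since the singular values of $\mathbf{V}$ are the square roots of the eigenvalues of the Hermitian positive semidefinite matrix $\mathbf{G}$, the condition number defined in the excerpt satisfies $\kappa_2(\mathbf{V})^2 = \lambda_{\max}(\mathbf{G})/\lambda_{\min}(\mathbf{G})$. Thus it suffices to confine the spectrum of $\mathbf{G}$ to the interval $[\,M_j' - 2^j/d_j,\; M_j' + 2^j/d_j\,]$, after which forming the ratio immediately produces the bound $(\ref{upperbound_condition})$.

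First I would compute the entries of $\mathbf{G}$. The diagonal entries equal $\sum_{p=0}^{M_j'-1} 1 = M_j'$, while for $r\ne s$ the entry is the geometric sum $\sum_{p=0}^{M_j'-1}\omega_{2^j}^{\sigma_j\, p\,(n_s-n_r)}$, whose modulus is the Dirichlet-type quotient $|\sin(\pi M_j' \sigma_j(n_s-n_r)/2^j)|/|\sin(\pi \sigma_j(n_s-n_r)/2^j)|$ and is therefore at most $1/|\sin(\pi \sigma_j(n_s-n_r)/2^j)|$. By the definition of $d_j$ in $(\ref{def_d_sigma})$, the periodic distance between any two nodes is at least $d_j$, so each off-diagonal modulus is bounded by $1/\sin(\pi d_j/2^j)$, and in particular the nodes are distinct because $d_j>0$.

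Next I would split $\mathbf{G} = M_j'\,\mathbf{I} + \mathbf{B}$, where $\mathbf{B}$ is Hermitian with vanishing diagonal. Every eigenvalue of $\mathbf{G}$ then lies within the spectral radius of $\mathbf{B}$ of the value $M_j'$, so the entire statement reduces to showing that the spectrum of $\mathbf{B}$ is contained in $[-2^j/d_j,\,2^j/d_j]$. Granting this, the hypothesis $M_j' > 2^j/d_j$ forces $\lambda_{\min}(\mathbf{G}) \ge M_j' - 2^j/d_j > 0$ and $\lambda_{\max}(\mathbf{G}) \le M_j' + 2^j/d_j$, which is exactly what is needed.

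The delicate point, and the step I expect to be the main obstacle, is the sharp eigenvalue bound $\pm\,2^j/d_j$ for $\mathbf{B}$. The naive route of estimating the spectral radius by the maximal absolute row sum (Gershgorin) together with the elementary inequality $\sin x \ge 2x/\pi$ yields only $\frac{2^j}{d_j}$ times a harmonic sum, i.e.\ it loses a spurious logarithmic factor, precisely because the triangle inequality discards the oscillation of the Dirichlet kernels. To recover the clean reciprocal-separation constant $2^j/d_j$ one must exploit this cancellation. I would therefore invoke the large sieve inequality of Selberg and Montgomery--Vaughan for well-separated frequencies, which gives $\lambda_{\max}(\mathbf{G}) \le M_j' - 1 + 2^j/d_j \le M_j'+2^j/d_j$ directly, and, for the harder lower bound $\lambda_{\min}(\mathbf{G}) \ge M_j' - 2^j/d_j$, the Beurling--Selberg extremal-function construction of Moitra; both deliver exactly the constant $2^j/d_j$ and thereby close the argument.
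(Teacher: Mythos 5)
The paper does not prove Theorem~\ref{Theorem_Moitra} itself --- it is quoted from \cite{Mo15,PWC18} and Theorem~10.23 in \cite{PPS18} --- and your outline reproduces exactly the argument of those sources: pass to the Gram matrix $\mathbf{G}=\mathbf{V}^{*}\mathbf{V}$, observe that Gershgorin combined with the triangle inequality only yields $2^{j}/d_{j}$ times a harmonic sum (a spurious logarithmic factor), and instead confine the spectrum via the Selberg/Montgomery--Vaughan large sieve for the upper bound and the Beurling--Selberg minorant construction used by Moitra for the lower bound. Your proposal is therefore correct and follows the same route as the cited proof; the one detail worth making explicit is that the majorant/minorant must be taken for an interval of length $M_j'$ containing the exponents $0,\dots,M_j'-1$ (e.g.\ $[-\tfrac{1}{2},\,M_j'-\tfrac{1}{2}]$), so that their integrals are $M_j'\pm 2^{j}/d_{j}$ and the constants of (\ref{upperbound_condition}) come out exactly, rather than the variant $M_j'\pm 2^{j}/d_{j}-1$ obtained from the interval $[0,M_j'-1]$.
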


However, this estimate cannot be used for square matrices,  i.e., for $M_{j} = M_{j}'$,  and it is not very sharp for large $M_j$.
Indeed, if $d_{j}=2^{j}/M_j$ which means that the values $\sigma_j \, n_k$ are equidistantly distributed  on the periodic interval $[0, 2^{j})$, then the square matrix $M_j^{-1/2} \, \mathbf{V}_{M_j,M_j}(\sigma_j)$ (with $M_j'=M_j$) is orthogonal with condition number $1$ 
 (see \cite{BF07}), while the estimate (\ref{upperbound_condition}) cannot be applied.
On the other hand, if $M_j'=2^{j}$,  then we can simply conclude that
$\mathbf{V}_{2^{j},M_j}(\sigma_j)^* \mathbf{V}_{2^{j},M_j}(\sigma_j) = 2^{j}\, {\mathbf I}_{M_j}$ such that we again achieve condition number $1$, while (\ref{upperbound_condition}) provides $\frac{2^{j} (1+1/d_{j})}{2^{j}(1-1/d_{j})}$, which again fails for the worst case $d_{j}=1$ completely.
Therefore, we apply another estimate, which is a simple consequence of the Theorem of Gershgorin, and can be iteratively computed during the iteration steps. It is based on the following Theorem.

\begin{theorem}\label{cond2}
Let $0\leq n_{1}<n_{2}<\ldots<n_{M_j}<2^{j}$  be a given set of indices,  and assume that $\sigma_{j}(n_{k}-n_{\ell}) \neq 0 \, \mathrm{mod} \, 2^{j}$.
Further, let for all $k=1, \ldots , M_j$,  $M_{j} \le M_{j}' \le 2^{j}$, and
\begin{equation}\label{sk}
S_k(\sigma_j) := \sum_{\mycom{\ell=1}{\ell \neq k}}^{M_j} \left|\frac{\sin \Big( \frac{M_j' \pi }{2^{j}} \, \sigma_j \, (n_k- n_{\ell})\Big)}{\sin \Big(\frac{\pi}{2^{j}} \, \sigma_j \, (n_k- n_{\ell})\Big)} \right|.
\end{equation}
Then the condition number of the Vandermonde matrix $\mathbf{V}_{M_j',M_j}(\sigma_j)$ in $(\ref{VR})$ is bounded by
\begin{equation}
\label{upperbound1}
    \kappa_{2}(\mathbf{V}_{M_j',M_j}(\sigma_j))^2 \leq \frac{M_j'+ \max_{k} S_k(\sigma_j)}{M_j'-\max_{k} S_k(\sigma_j)}.
\end{equation} 
\end{theorem}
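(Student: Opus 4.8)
The plan is to pass to the Gram matrix
\[
\mathbf{G} := \mathbf{V}_{M_j',M_j}(\sigma_j)^*\,\mathbf{V}_{M_j',M_j}(\sigma_j) \in \mathbb{C}^{M_j\times M_j}
\]
and to localize its spectrum via the Theorem of Gershgorin. By the definition of $\kappa_2$ given before the statement, $\kappa_2(\mathbf{V}_{M_j',M_j}(\sigma_j))^2$ equals the ratio $\lambda_{\max}(\mathbf{G})/\lambda_{\min}(\mathbf{G})$ of the largest to the smallest eigenvalue of the Hermitian positive semidefinite matrix $\mathbf{G}$, since the extreme values of $\|\mathbf{V}_{M_j',M_j}(\sigma_j)\,\mathbf{u}\|_2$ over unit vectors $\mathbf{u}$ are the square roots of $\lambda_{\max}(\mathbf{G})$ and $\lambda_{\min}(\mathbf{G})$. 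Hence it suffices to bound the eigenvalues of $\mathbf{G}$ from above and below.

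First I would compute the entries of $\mathbf{G}$ explicitly. Writing $\omega := \omega_{2^j}$, the $(k,\ell)$ entry is the inner product of columns $k$ and $\ell$,
\[
G_{k\ell} = \sum_{p=0}^{M_j'-1} \overline{\omega^{\sigma_j p n_k}}\,\omega^{\sigma_j p n_\ell} = \sum_{p=0}^{M_j'-1} \omega^{\sigma_j p (n_\ell - n_k)}.
\]
On the diagonal $k=\ell$ every summand is $1$, so $G_{kk}=M_j'$. For $k\neq\ell$ the hypothesis $\sigma_j(n_k-n_\ell)\neq 0 \bmod 2^j$ ensures $\omega^{\sigma_j(n_\ell-n_k)}\neq 1$, so the geometric sum evaluates to $(\omega^{M_j'\sigma_j(n_\ell-n_k)}-1)/(\omega^{\sigma_j(n_\ell-n_k)}-1)$. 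Using $|e^{\mathrm{i}\theta}-1|=2|\sin(\theta/2)|$ together with $\omega=\mathrm{e}^{-2\pi\mathrm{i}/2^j}$, and the fact that sine is odd (so the sign of $n_k-n_\ell$ is irrelevant under the absolute value), the modulus collapses to the Dirichlet-kernel form
\[
|G_{k\ell}| = \left|\frac{\sin\!\big(\tfrac{M_j'\pi}{2^j}\,\sigma_j(n_k-n_\ell)\big)}{\sin\!\big(\tfrac{\pi}{2^j}\,\sigma_j(n_k-n_\ell)\big)}\right|.
\]
Summing over $\ell\neq k$ recovers precisely $\sum_{\ell\neq k}|G_{k\ell}| = S_k(\sigma_j)$ as defined in $(\ref{sk})$.

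Then I would apply Gershgorin's theorem to the Hermitian matrix $\mathbf{G}$: every eigenvalue lies in at least one disk centered at $G_{kk}=M_j'$ with radius $\sum_{\ell\neq k}|G_{k\ell}| = S_k(\sigma_j)$. As the eigenvalues of $\mathbf{G}$ are real and nonnegative, this yields
\[
M_j' - \max_k S_k(\sigma_j) \;\le\; \lambda \;\le\; M_j' + \max_k S_k(\sigma_j)
\]
for every eigenvalue $\lambda$ of $\mathbf{G}$. Substituting the extreme bounds into $\kappa_2^2 = \lambda_{\max}/\lambda_{\min}$ gives the claimed estimate $(\ref{upperbound1})$.

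The calculation is essentially routine; the two points requiring care are the bookkeeping in the geometric-sum evaluation and, more importantly, an implicit nondegeneracy requirement. The lower Gershgorin bound $M_j'-\max_k S_k(\sigma_j)$ must be strictly positive for the right-hand side of $(\ref{upperbound1})$ to be a meaningful (finite, positive) quantity, and in particular to guarantee that $\mathbf{G}$ is invertible so that $\kappa_2$ is finite and $\mathbf{V}_{M_j',M_j}(\sigma_j)$ has full column rank. I would therefore read the estimate under the condition $M_j'>\max_k S_k(\sigma_j)$, in direct analogy with the hypothesis $M_j'>2^j/d_j$ of Theorem \ref{Theorem_Moitra}.
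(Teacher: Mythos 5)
Your proof is correct and follows essentially the same route as the paper: form the Gram matrix, compute its diagonal entries as $M_j'$ and the moduli of the off-diagonal entries as Dirichlet-kernel ratios summing to $S_k(\sigma_j)$, and apply Gershgorin to bound the extreme eigenvalues. Your added remark that the bound is only meaningful under $M_j' > \max_k S_k(\sigma_j)$ is a fair observation left implicit in the paper.
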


\begin{proof}
Considering the matrix product ${\mathbf W}:= \mathbf{V}_{M_j',M_j}(\sigma_j)^* \, \mathbf{V}_{M_j',M_j}(\sigma_j) \in {\mathbb C}^{M_j \times M_j}$, it follows for the components $w_{k,\ell}$ of ${\mathbf W}$ that 
$$ w_{k,k} = \sum_{p=0}^{M_j'-1} \omega_{2^{j}}^{p \, \sigma_j \, (n_k - n_k)} = M_j', \qquad k=0, \ldots , M_{j}-1,$$
and for $k \neq \ell$  and $\sigma_{j}(n_{k}-n_{\ell}) \neq 0 \, \mathrm{mod} \, 2^{j}$,
$$ |w_{k,\ell}| = 
\Big|\sum_{p=0}^{M_j'-1} \omega_{2^{j}}^{p \, \sigma_j \, (n_k - n_{\ell})}\Big| = \Big| \frac{1-\omega_{2^{j}}^{M_j'\sigma_j (n_k - n_{\ell})}}{1- \omega_{2^{j}}^{\sigma_j (n_k - n_{\ell})}} \Big|
= \Big| \frac{\sin(\frac{M_j' \pi}{2^{j}} \, \sigma_j (n_k- n_{\ell}))}{\sin(\frac{\pi}{2^{j}} \, \sigma_j (n_k- n_{\ell}))} \Big|. $$
Thus, $S_k(\sigma_j)$ is the sum of the absolute values of all non-diagonal components in the $k$-th row of ${\mathbf W}$.
The Theorem of Gershgorin implies now that the maximal eigenvalue of ${\mathbf W}$ is bounded from above by $M_j' + \max_{k} S_k(\sigma_j)$, and the smallest  eigenvalue is bounded from below by $M_j' - \max_{k} S_k(\sigma_j)$.
\end{proof}

While the estimate (\ref{upperbound1}) is quite simple to achieve, it is more accurate than (\ref{upperbound_condition}).
In particular, in the two special cases $M_j'=M_j$, $d_{j} = 2^{j}/M_j$ and $M_j'=2^{j}$, $d_{j}=1$, the estimate is sharp, and we obtain the true condition number $1$.

For our computation of $\sigma_{j}$ in Section \ref{sec:sigma}, we will however simplify (\ref{sk}) and  will consider instead an approximation of the upper bound of ${S}_k(\sigma_j)$,
\begin{equation}\label{st}
\tilde{S}_k(\sigma_j) := \sum_{\mycom{\ell=1}{\ell \neq k}}^{M_j} \left|\frac{1}{\sin(\frac{\pi}{2^{j}} \, \sigma_j \, (n_k^{(j)}- n_{\ell}^{(j)}))} \right| \ge S_k(\sigma_j)
\end{equation}
which is not longer dependent on $M_j'$. 
 Note that $\tilde{S}_k(\sigma_j)>2^{j}$ can appear, if $\sigma_{j}$ is not well chosen.

\subsection{Efficient computation of $\sigma_j$}
\label{sec:sigma}
For a given set of indices $0 \le n_1 < n_2 < \ldots   < n_M < 2^j$
we want to find a suitable $\sigma_j \in \{1, \ldots , 2^{j}-1 \}$ such that  an approximation of $\max_k \tilde{S}_k(\sigma_j)$ is minimal. 
More precisely, as shown in Algorithm \ref{algorithm_prime}, 
we compare different possible parameters $\sigma$ by comparing the sums of four terms in the sum (\ref{st}), where the largest term is always included.

We surely could just consider all possible sets $\{ \sigma n_1, \ldots, \sigma n_M\}$ for $\sigma \in \{1, \ldots , 2^{j}-1\}$, compute the maximal sum $\tilde{S}_k^{(j)}(\sigma)$ and compare the results to find the optimal parameter $\tilde\sigma_j$. However, this procedure is too expensive. To achieve a sparse FFT algorithm with the desired overall complexity of ${\mathcal  O}(M^2 \log N)$, we can spend at most ${\mathcal O}(M^2)$ operations to find a suitable parameter $\sigma_j$.

To avoid vanishing  distances $\pm\sigma_j(n_k - n_\ell) \, \mathrm{mod} \,  2^{j}=0$
for all $n_k \neq n_\ell$, we will only consider odd integers $\sigma_j \ge 1$. We then have 
 that $2^{j}$ and $\sigma_j$ are co-prime such that for each odd $\sigma_j$ we at least achieve that $\max_k \tilde{S}_k^{(j)}(\sigma_j)$ is bounded.  As our numerical tests show that prime numbers are good candidates for $\sigma_j$, we propose the following algorithm to determine $\sigma_j$.

 \begin{algorithm}{\textbf{(Computation of $\sigma_j$ if $M_j > M_{j-1}$)}}
 \label{algorithm_prime}
{}\\
\textbf{Input:}\\ $N :=2^j$. \\Index set $I^{(j)}= \{n_1, \ldots , n_{M_j}\}$.\\ 
\textbf{Initialization:}\\
Set $M_{j} :=\#I^{(j)}$ and choose $K$ with $K \le M_{j}/ \log_{2} M_{j}$. \\
Let $\Sigma :=$ be set of $K$ largest prime numbers smaller than $N/2$. \\ 
\textbf{Loop:}\\
For all $\sigma \in \Sigma$:
\begin{addmargin} [25pt]{0pt}
Compute the set $\sigma I^{(j)} := \{\sigma l \mod N :l \in I^{(j)} \}$.\\
Order the elements of $\sigma I^{(j)}$ by size to get $\tilde{n}_1 < \ldots  < \tilde{n}_{M_{j}}$.\\
Compute the sequence of distances $\delta_k:=\tilde{n}_k- \tilde{n}_{k-1}$, $k=1, \ldots , M_{j}$,  where $\tilde{n}_0:=\tilde{n}_{M_{j}}-N$.\\
Find the index of the smallest distance $\tilde{k} := \argmin_{k=1, \ldots , M_{j}} \delta_k$.\\
Compute 
$$ D_{\sigma} := \max \left\{\left|\frac{1}{\sin(\frac{\delta_{\tilde{k}} \pi}{N}) }\right| + \left|\frac{1}{\sin(\frac{\delta_{\tilde{k}-1} \pi}{N}) }\right| , \left|\frac{1}{\sin(\frac{\delta_{\tilde{k}} \pi}{N})}\right|+ \left|\frac{1}{\sin(\frac{\delta_{\tilde{k}+1} \pi}{N}) }\right|    \right\}$$
with the convention that $\delta_0:=\delta_{M_{j}}$ and $\delta_{M_{j}+1}:=\delta_1$.
\end{addmargin}
\textbf{Completion:}\\
Choose $\sigma \in \Sigma$ with minimal  $D_\sigma$.\\
If there are several parameters $\s$ achieving the same value $D_{\sigma}$,
\begin{addmargin} [25pt]{0pt}
choose the $\sigma$ which minimizes the sum
 $\left|\sum_{k=1}^{M_{j}} \omega_N^{\sigma n_k} \right|$.
\end{addmargin}
\textbf{Output:} $\sigma_j :=\sigma$
\end{algorithm}

The most expensive step in Algorithm \ref{algorithm_prime} is the sorting of $M_{j}$ elements in $\sigma I^{(j)}$,  which can be done with $M_{j} \log M_{j} \le M \log M$ operations.  Since $\Sigma$ contains $K < M_{j}/\log_{2} M_{j}$ elements, the algorithm has a computational cost of ${\mathcal O}(M^2)$. 
Note, that we did not compute the complete sum $\tilde{S}_k(\sigma)$ for all choices of $\sigma$ in Algorithm \ref{algorithm_prime}. 
Instead, for fixed $\sigma$, we search for an index $\tilde{k}$ that provides the smallest (periodic) distance $|\sigma (n_{\tilde{k}} - n_{\tilde{k}-1})| = \min_{k \neq \ell}|\sigma (n_{k} - n_{\ell})|$. This index $\tilde{k}$ is a good candidate for $\argmax_k \tilde{S}_k(\sigma)$. We then only 
compute the sum of the largest component and the neighboring component of $\tilde{S}_{\tilde{k}}(\sigma)$ instead of the full sum, since  $\tilde{S}_{\tilde{k}}(\sigma)$ is mainly governed by these components.

\begin{remark}
Using Theorem \ref{Theorem_Moitra} it is of course also possible to determine $\sigma_j$ by comparing only the minimal distance $d({\sigma})$ in (\ref{def_d_sigma}) for all $\sigma \in \Sigma$, and to choose $\sigma \in \Sigma$ that maximizes this distance.\\
There are always enough odd prime numbers available in $[1, \, \frac{2^{j}}{2}]$, since $M_j^2 < 2^{j}$  (see, e.g., \cite{RS62}).
\end{remark}

\subsection{Determination of $M_j'$}

Further, we need to fix the number of needed rows $M_j' \ge M_j$ to ensure that the Vandermonde matrix $\mathbf{V}_{M_j',M_j}(\sigma_j)$ is well conditioned.
Employing Theorem \ref{cond2},  we consider $M_{j}' = c \, M_{j}$ for a small set of integers $c$, e.g. $c \in \{1, 2,  5\}$. Starting with $c=1$, we compute $\max_{k} S_{k}(\sigma_{j})$ in (\ref{sk}) with ${\mathcal O}(M_{j}^{2})$ operations, and check via (\ref{upperbound1}) whether  the condition number of ${\mathbf V}_{M_{j}',M_{j}}(\sigma_{j})$  is acceptable. If it is too large, we enlarge $c$.

\begin{remark}
We  can also use the estimates in Theorem \ref{Theorem_Moitra} for determining $M_j'$. In this case, we simply fix $M_j'$ such that 
$$ \left( \frac{M_j' + 2^{j}/d_{j}}{M_j'- 2^{j}/d_{j}} \right)^{1/2} < C $$
where $C$ is a pre-determined bound for the condition number of ${\mathbf V}_{M_{j}',M_{j}}(\sigma_{j})$.
However, this estimate  usually leads to a strong overestimation of $M_j'$.
\end{remark}

In our numerical experiments  we achieved good results with the simple bound 
\begin{equation}\label{res} M_j' = c\,  M_j \qquad \textrm{with} \qquad c :=\min\left\{\left\lfloor \frac{2^j/M_j}{d_{j}}\right\rfloor, c_{\max}\right\},
\end{equation}
where $c_{\max}$ is usually an integer with $c_{\max} \le 5$  (see Section \ref{results}).
This setting  can also be understood as a compromise for having a good condition  number of the  
matrix ${\mathbf A}^{(j)}$ in the system (\ref{a*}) on the one hand and the computational cost 
to solve the linear system on the other hand. Using for example the QR decomposition algorithm in \cite{De89} 
for rectangular Vandermonde matrices of size $c M_j \times M_j$, we obtain a complexity of $(5c+ \frac{7}{2})M_j^2 + {\mathcal O}(c M_j)$.

\subsection{Choice of ${\mathbf A}^{(j)}$ if $M_{j-1}=M_j$}
\label{sec:mj}

If $M_j=M_{j-1}$, we apply the following Lemma which is an extension of Theorem 4.2 in \cite{PWC18}.

\begin{lemma}
Let $\sigma_{j-1}$ and $M_{j-1}'$ be the parameters used in the Algorithm $\ref{algorithm_Rec}$ to determine ${\mathbf V}_{M_{j-1}', M_{j-1}}(\sigma_{j-1})$ in the iteration step $j-1$, where $0<n_1^{(j-1)}<\ldots<n_{M_{j-1}}^{(j-1)}<2^{j-1}$ are the support indices of $\textbf{x}^{(j-1)}$.
Further, assume that we have found ${\mathbf x}^{(j)}$ with $M_j = M_{j-1}$, and  support indices $0<n_1^{(j)}<\ldots<n_{M_{j}}^{(j)}<2^{j}$.
 Then we can simply choose $\sigma_j:=2 \sigma_{j-1}$ and $M_j' := M_{j-1}'$ to achieve a Vandermonde matrix 
${\mathbf V}_{M_{j}', M_{j}}(\sigma_{j})$ for iteration step $j$ of Algorithm $\ref{algorithm_Rec}$. With this choice, ${\mathbf V}_{M_{j}', M_{j}}(\sigma_{j})$ coincides with ${\mathbf V}_{M_{j-1}', M_{j-1}}(\sigma_{j-1})$ up to possible permutation of columns. In particular, we have
$$ \kappa_2({\mathbf V}_{M_{j}', M_{j}}(\sigma_j)) = \kappa_2({\mathbf V}_{M_{j-1}', M_{j-1}}(\sigma_{j-1})).$$
\end{lemma}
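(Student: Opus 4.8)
The plan is to show that the two Vandermonde matrices have exactly the same multiset of generating nodes, so that they agree up to a reordering of their columns; the equality of condition numbers will then be immediate because a column permutation is a unitary transformation and hence preserves all singular values. So the whole argument reduces to one arithmetic identity for the nodes plus one combinatorial fact identifying the support indices at the two consecutive levels.

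First I would pin down the relation between the support indices at levels $j-1$ and $j$. Since $\mathbf{x}^{(j-1)}$ is the periodization $x^{(j-1)}_k = x^{(j)}_k + x^{(j)}_{k+2^{j-1}}$ for $k=0,\ldots,2^{j-1}-1$, the assumption (\ref{no_cancell}) guarantees both that every element of $I^{(j-1)}$ has a significant preimage in $I^{(j)}$ and (together with the containment $I^{(j)}\subseteq I^{(j-1)}\cup(I^{(j-1)}+2^{j-1})$ noted earlier) that reduction modulo $2^{j-1}$ maps $I^{(j)}$ onto $I^{(j-1)}$. Because $M_j = M_{j-1}$, this surjection between two index sets of equal cardinality is a bijection: for each $n\in I^{(j-1)}$ exactly one of the two candidates $n,\,n+2^{j-1}$ survives in $I^{(j)}$. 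Writing both sets in increasing order then induces a permutation $\pi$ of $\{1,\ldots,M_j\}$ with $n_r^{(j)}\equiv n_{\pi(r)}^{(j-1)}\pmod{2^{j-1}}$ for every $r$. This is the step I expect to be the main obstacle, since it is the only place where the hypotheses $M_j=M_{j-1}$ and (\ref{no_cancell}) genuinely enter; the remaining computations are routine.

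Next I would compute the generating nodes of the new matrix. With $\sigma_j=2\sigma_{j-1}$ and the elementary identity $\omega_{2^{j}}^{2}=\omega_{2^{j-1}}$, the $(p,r)$ entry of $\mathbf{V}_{M_j',M_j}(\sigma_j)$ equals $\omega_{2^{j-1}}^{\sigma_{j-1}\,p\,n_r^{(j)}}$. Substituting $n_r^{(j)}=n_{\pi(r)}^{(j-1)}+\varepsilon_r\,2^{j-1}$ with $\varepsilon_r\in\{0,1\}$ and using $\omega_{2^{j-1}}^{2^{j-1}}=1$ to discard the $\varepsilon_r 2^{j-1}$ contribution, this reduces to $\omega_{2^{j-1}}^{\sigma_{j-1}\,p\,n_{\pi(r)}^{(j-1)}}$, which is precisely the $(p,\pi(r))$ entry of $\mathbf{V}_{M_{j-1}',M_{j-1}}(\sigma_{j-1})$. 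Since moreover $M_j'=M_{j-1}'$, the two matrices have the same number of rows, so I obtain the column-permutation identity $\mathbf{V}_{M_j',M_j}(\sigma_j)=\mathbf{V}_{M_{j-1}',M_{j-1}}(\sigma_{j-1})\,\mathbf{P}_\pi$, where $\mathbf{P}_\pi$ is the permutation matrix of $\pi$. (As a byproduct, distinctness of the old nodes carries over, so the new matrix again has the node-separation structure required by Theorem \ref{cond2}, even though $\sigma_j$ is now even.)

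Finally, for the condition numbers I would note that $\mathbf{P}_\pi$ is orthogonal, whence the Gram matrix factorizes as $\mathbf{V}_{M_j',M_j}(\sigma_j)^{*}\mathbf{V}_{M_j',M_j}(\sigma_j)=\mathbf{P}_\pi^{*}\,\mathbf{V}_{M_{j-1}',M_{j-1}}(\sigma_{j-1})^{*}\mathbf{V}_{M_{j-1}',M_{j-1}}(\sigma_{j-1})\,\mathbf{P}_\pi$, a unitary conjugate of the level-$(j-1)$ Gram matrix. Conjugation preserves the spectrum, so the extreme eigenvalues, and hence the extreme singular values of the two Vandermonde matrices, coincide. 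This yields $\kappa_2(\mathbf{V}_{M_j',M_j}(\sigma_j))=\kappa_2(\mathbf{V}_{M_{j-1}',M_{j-1}}(\sigma_{j-1}))$, completing the proof.
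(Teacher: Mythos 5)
Your proof is correct and takes essentially the same route as the paper's: rewrite the new nodes via $\omega_{2^{j}}^{2\sigma_{j-1}pn_r^{(j)}}=\omega_{2^{j-1}}^{\sigma_{j-1}pn_r^{(j-1)}}$, conclude that the two matrices agree up to a column permutation, and use that permutation matrices are unitary so the condition number is unchanged. If anything, you are slightly more careful than the paper in making the permutation $\pi$ explicit and in justifying, via the no-cancellation assumption and $M_j=M_{j-1}$, that exactly one of $n,\,n+2^{j-1}$ survives for each $n\in I^{(j-1)}$.
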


\begin{proof} If 
$M_j=M_{j-1}$, then it follows that $n_r^{(j)} \in \{n_r^{(j-1)}, n_r^{(j-1)}+2^{j-1}\}$  for all $r=1, \ldots , M_{j-1}$. With $\sigma_j=2\sigma_{j-1}$ we obtain
$$\sigma_j \, n_r^{(j)} \, \mathrm{mod} \, 2^j = 2\sigma_{j-1}n_r^{(j)} \, \mathrm{mod} \, 2^j=2\sigma_{j-1}n_r^{(j-1)} \, \mathrm{mod} \, 2^j. $$ 
Thus, for $p=1, \ldots, M_j'$ (with $M_j'=M_{j-1}'$),
$$ \omega_{2^j}^{\sigma_j (p-1) n_r^{(j)}} = 
\omega_{2^j}^{2 \sigma_{j-1} (p-1) n_r^{(j)}} =
\omega_{2^j}^{2 \sigma_{j-1} (p-1) n_r^{(j-1)}} =
\omega_{2^{j-1}}^{\sigma_{j-1} (p-1) n_r^{(j-1)}}.
$$
Hence, ${\mathbf V}_{M_{j-1}', M_{j-1}}(\sigma_{j-1})$ and ${\mathbf V}_{M_{j}', M_{j}}(\sigma_{j})$ have the same columns, and may differ only due to a different ordering of columns. In other words, there is an $M_j \times M_j$ permutation matrix ${\mathbf P}_{M_j}$, such that ${\mathbf V}_{M_{j}', M_{j}}(\sigma_{j}) = {\mathbf V}_{M_{j-1}', M_{j-1}}(\sigma_{j-1}) \, {\mathbf P}_{M_j}$.
In particular, the two matrices have the same condition number.
\end{proof}

This observation implies that there will be no extra effort to compute the  matrix ${\mathbf A}^{(j)}$ at all iteration steps $j$, where the sparsity $M_j$ has not changed compared to $M_{j-1}$.

\section{The direct sparse FFT algorithm}

We consider now the direct sparse FFT problem stated in (b) in  Section 1. For given ${\mathbf x} \in {\mathbb C}^N$, we want to determine ${\mathbf y} := \hat {\mathbf x} = {\mathbf F}_N \, {\mathbf x}$, assuming that $ {\mathbf y}$ possesses unknown sparsity $M$.
We will show that our Algorithm \ref{algorithm_Rec} can be transferred to this problem.

First, we observe that the Fourier matrix satisfies the property
$$ {\mathbf F}_N^{-1} = \frac{1}{N} \overline{\mathbf F}_N = \frac{1}{N} {\mathbf J}_N' \, {\mathbf F}_N $$
 (see Equation (3.34) in \cite{PPS18}), where ${\mathbf J}_N' := (\delta_{(j+k) \, \mathrm{mod}\,  N})_{j,k=0}^{N-1}$ is the so-called flip matrix with $({\mathbf J}_N')^{-1} = {\mathbf J}_N'$. Here, $\delta_j$ denotes the Kronecker symbol, i.e.,  $\delta_j =0$ for $j\neq 0$ and $\delta_j=1$ for $j=0$.
Thus, the relation  ${\mathbf x} = {\mathbf F}_N^{-1} \, {\mathbf y}$ is equivalent  to
$$ {\mathbf w} := N \, {\mathbf J}_N' {\mathbf x} = {\mathbf F}_N {\mathbf y}. $$
In other words, if we replace the given vector ${\mathbf x}$ by 
${\mathbf w}$ in Algorithm \ref{algorithm_Rec}, then ${\mathbf w}$ is the given Fourier transform of the desired vector ${\mathbf y}$, and we can apply Algorithm \ref{algorithm_Rec} directly to compute ${\mathbf y}$.

\section{ Numerical experiments}
\label{results}

First, we present some numerical experiments showing that the algorithm in \cite{PWC18} for sparsity $M>20$ is no longer reliable.
We generate randomly chosen sets of support indices $I_M \subset \{0, \ldots , 2^{15}- 1\}$ with different cardinalities $M=20,30,\ldots,100$, and randomly choose values $x_k$ for $k \in I_M$ in double precision arithmetics. 
Then we apply our Algorithm \ref{algorithm_Rec}, where access to the Fourier transform of $\textbf{x}\in\mathbb{C}^{2^J}$ is provided.
While $\sigma_j$ is optimally chosen as a prime number according to 
Algorithm 4.5 in \cite{PWC18}, we only consider square Vandermonde matrices (as in \cite{PWC18}), i.e., we set $c_{\max}=1$. We compare the output index set $I_{out}$ with the generated set $I_M$ of indices and count the failures of 100 tests for each $M$. The results are presented in Figure \ref{error_rate_J15}. The test shows that the algorithm starts to be unreliable for sparsity $M >20$.

\begin{figure}
\begin{center}
    \includegraphics[scale=0.6]{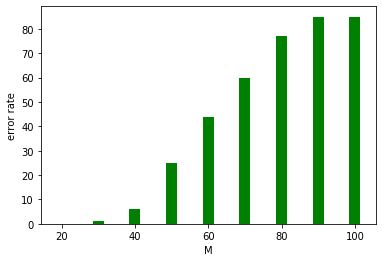}
    \captionof{figure}{Error rate in  percentage for the computed set of indices for $c_{max}=1$ and $J=15$.}
    \label{error_rate_J15}
\end{center}
\end{figure}

We now run the test with the same input data as above, but used the criteria in (\ref{res}) with  $c_{\max}=2$. For any $M=20,30,\ldots,100$, no failures occur  for the computed set of indices $I_{out}$, i.e., we always find $I_M= I_{out}$. Even if we run the tests for $M=200$, the error rate is still zero. 

To understand this strong effect  when the number of rows of the Vandermonde matrix is enlarged, we analyze the condition numbers of the Vandermonde matrices occurring in the computations for different 
values $c_{\max}$.
We generate sets  $I_M$ of indices and randomly choose the amplitudes of components of ${\mathbf x}$ with support  $I_M$. For Algorithm \ref{algorithm_Rec}, we provide access to the  Fourier transformed vector  $\hat{\mathbf x}$ as an input as before for the tuples $(J,M)$ with $J=15,16,\ldots,22$, and $M=20,30,40,50$.  In this experiment, we vary  $c_ {\max} \in \{1,2,5\}$. In each test we compute the average over all condition numbers of the used Vandermonde matrices and repeat this 20 times for each tuple $(J,M)$. Finally, we take the mean of all the 20 averages, and obtain the results given in the Tables \ref{tab:condition_taumax1} and \ref{tab:condition_taumax2und5_bigN}. The results in Table \ref{tab:condition_taumax1} show that a suitable choice of the parameter $\sigma_j$, as applied in \cite{PWC18}, is not sufficient to ensure moderate condition numbers of the Vandermonde matrices involved in the sparse FFT algorithm for $M \ge 20$.

\begin{table}[th]
    \centering
    \begin{tabular}{c}
    $c_{max}=1$\\
    \centering
    \begin{tabular}{|c|r r r r|}
    \hline
    $J$ & $M=20$  & $M=30$ & $M=40$ & $M=50$\\
    \hline
    15 & {45587} & {8959761} & {826581656} &{813444189055}  \\
    16 & {150932} & {3541859} & {41764903} &{535590260990711}   \\
    17 &{502398} &  {1044096} & {2914884097} &{719367030204.95}   \\
    18 & {103809} &  {674572} & {1080286999258065} &{1016723525704275}\\
    19 & { 10491} &  {4832052} & {111942753} &{12377927191183} \\
    20 & {41983} & {711412} & {918528399} &{93462229700} \\
    21 & {61938} & {3502253} & {567002193} &{143672696329261} \\
    22 & {388062} &  {37168024} & {259341688} &{28197228} \\
    \hline
    \end{tabular}
    \end{tabular}
\caption{Average condition  number for $c_{max}=1$ after 20 tests.}
\label{tab:condition_taumax1}
\end{table}

In Table \ref{tab:condition_taumax2und5_bigN}, we provide some further condition numbers for larger numbers $M$ of significant  vector entries up to $M=200$ and $N=2^{15},\dots,2^{22} $. The experiments show that $c_{\max} =2$, i.e., doubling the number of rows in the  matrix ${\mathbf A}^{(j)}$, is usually sufficient for $M\le 100$. For $M>100$, we need to take a larger $c_{\max}$.

\begin{table}[th]
    \centering
    \begin{tabular}{c c }
    $c_{max}=2$ & $c_{max}=5$\\
    \begin{tabular}{|c|r r r|}
    \hline
    $J$ & $M=20$  & $M=100$ & $M=200$\\
    \hline
    15 & {4.31} &{128.83} &{12623.74} \\
    16 & {5.57}  &{415.11} &{167096.38}  \\
    17 & {7.94}  &{74.23} &{32290.12} \\
    18 & {40.52}  &{591.17} &{5901.65}  \\
    19 & {14.76}  &{732.74} &{154631.91}  \\
    20 & {14.46}  &{231.35} &{27979.52}\\
    21 & {17.51} &{259.04} &{14604.35}\\
    22 & {12.86}  &{360.91} &{17897.02} \\
    \hline
    \end{tabular}
    &
    \begin{tabular}{|c|r r r|}
    \hline
    $J$ & $M=20$  & $M=100$ & $M=200$\\
    \hline
    15 & {1.33}  &{4.52} &{16.42} \\
    16 & {1.36} &{8.01} &{38.64}  \\
    17 & {1.43} &{4.97} &{37.78}  \\
    18 & {1.79} &{8.59} &{19.76} \\
    19 & {1.44} &{10.13} &{38.64} \\
    20 & {1.39}  &{9.56} &{28.29} \\
    21 & {1.75}  &{7.25} &{22.41} \\
    22 & {1.63}  &{6.04} &{23.12} \\
    \hline
    \end{tabular}
\end{tabular}
\caption{Average condition  number for $c_{max}=2$ (left) and $c_{max}=5$ (right) after 20 tests.}
\label{tab:condition_taumax2und5_bigN}
\end{table}

\begin{figure}[t]
\begin{center}
    \includegraphics[scale = 0.60]{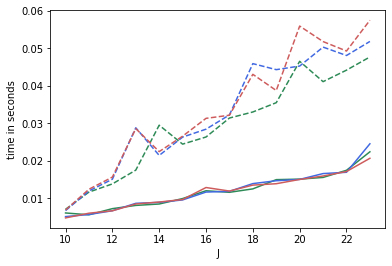} 
    \captionof{figure}{Runtime comparison of the Algorithmus \ref{algorithm_Rec} for $c_{max}=1$ (green), $c_{max}=5$ (blue), $c_{max}=20$ (red) for $M=10$ (solid line) and $M=30$ (dashed line) for length $N=2^J$ with $J=10,\ldots,24$.}
\label{runtime_tau}
\end{center}
\end{figure}

\begin{figure}[t]
\begin{center}
    \includegraphics[scale = 0.60]{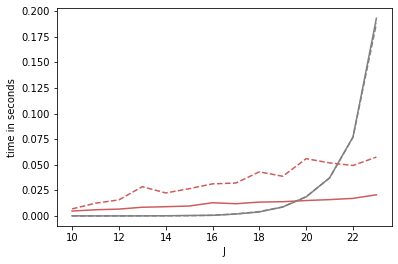}
    \captionof{figure}{Runtime comparison of Algorithmus \ref{algorithm_Rec} and  $c_{\max}=20$ (red) and the FFT (gray) for $M=10$ (solid line) and $M=30$ (dashed line) for length $N=2^J$ with $J=10,\ldots,24$.}
\label{runtime_FFT}
\end{center}
\end{figure}

Now, we investigate how the runtime of the Algorithm depends on   $c_{\max}$.  In Figure \ref{runtime_tau} we present  the average runtime for 20 tests with randomly chosen sparse vectors
with sparsities $M=10,30$ and for $c_{\max}=1, c_{\max}=5,c_{\max}=20$.
As we see in Figure \ref{runtime_tau}, our modifications have only a very small effect
on the runtime. Finally, in Figure \ref{runtime_FFT} we compare the runtime of the  Python implemented FFT \texttt{numpy.fft.fft} of length $2^J$ with our algorithm for $c_{\max}=20$. 
We can see, that our current Python implementation starts to be faster than the FFT for $ M \le 30$ and $N \ge 2^{20}$. It is available  under the link ``software'' on our homepage
\url{http://na.math.uni-goettingen.de}.

\section{Conclusions}
In this paper, we have presented a modification of the sparse FFT algorithm in \cite{PWC18}, which is based on the assumption that the wanted vector ${\mathbf x} \in {\mathbb C}^{N}$ with $N= 2^{J}$ is $M$-sparse, and that the components of the discrete Fourier transform $\hat{\mathbf x}= {\mathbf F}_{N}\, {\mathbf x}$ are available.
Our proposed algorithm has the complexity ${\mathcal O}(M^{2} \log N)$ and is sublinear in $N$ for small $M$.
As in \cite{PWC18}, the reconstruction of ${\mathbf x}$ is based on an iterative reconstruction of $2^{j}$-periodizations of ${\mathbf x}$ for $j=0, \ldots , J$. At each iteration step, one needs to solve an equation system of size ${\mathcal O}(M)$, where the coefficient matrices are governed by Vandermonde matrices which are submatrices of the Fourier matrix ${\mathbf F}_{2^{j}}$. 
Differently from \cite{PWC18}, we have considered rectangular Vandermonde matrices, and we  have presented efficient methods to determine these matrices in dependence of two parameters, which both have a huge impact on the condition number. 
The first parameter $\sigma_{j}$ changes the nodes $\omega_{2^{j}}^{n_{\ell}}$, $\ell=1, \ldots , M_{j}$ determining the Vandermonde matrix  to  $\omega_{2^{j}}^{\sigma_{j} n_{\ell}}$. Here $M_{j}\le M$ denotes the found sparsity of ${\mathbf x}^{(j)}$.  The second parameter $M_{j}' \ge M_{j}$ denotes the number of rows in the Vandermonde matrix.
One ingredient to determine suitable parameters $\sigma_{j}$ and $M_{j}'$ is the new estimate for the condition number of the occurring  Vandermonde matrices in Theorem \ref{cond2}.
As shown in the numerical experiments, the  presented modification of the sparse FFT algorithm makes it applicable also for larger sparsity values $M$ while the original algorithm in \cite{PWC18} started to be unreliable already for $M>20$. 

\section*{Compliance with Ethical Standards}

This article does not contain any studies with human participants or animals performed by any of the authors.\\
Informed Consent: Does not apply

\section*{Acknowledgement}
The authors like to thank the reviewers for very exact reading of the manuscript and many constructive remarks for its improvement.
The authors gratefully acknowledge  the support by the German Research Foundation in the framework of the RTG 2088.

\renewcommand\bibname{ References}
\begingroup
\small
\bibliography{bibliography}
\endgroup
\end{document}